\documentclass[11pt,reqno]{amsart}

\usepackage[a4paper,margin=1in]{geometry}

\usepackage{amsmath,amssymb,amsfonts,amsthm,mathtools}
\usepackage{graphicx}
\usepackage{booktabs}
\usepackage{threeparttable}
\usepackage{multicol}
\usepackage{microtype}
\usepackage{hyperref}

\numberwithin{equation}{section}

\theoremstyle{plain}
\newtheorem{thm}{Theorem}[section]
\newtheorem{theorem}[thm]{Theorem}
\newtheorem{lemma}[thm]{Lemma}

\theoremstyle{definition}

\theoremstyle{remark}

\title[Jacobsthal--Fibonacci Numbers]
{On a Diophantine Equation with Jacobsthal and Fibonacci Numbers}

\author{Daeyeoul Kim}
\address{Department of Mathematics,
Jeonbuk National University,
Jeonju, Republic of Korea}
\email{kdaeyeoul@jbnu.ac.kr}

\author{Zekiye Pinar Cihan}
\address{Department of Mathematics,
Bilecik Seyh Edebali University,
Bilecik, Turkey}
\email{pinarcihan@icloud.com}

\author{Zeynep Demirkol Ozkaya}
\address{Department of Medical Services and Technique,
Muradiye Vocational School,
Van Yuzuncu Yil University,
Van, Turkey}
\email{zeynepdemirkolozkaya@yyu.edu.tr}

\author{Ilker Inam}
\address{Department of Mathematics,
Bilecik Seyh Edebali University,
Bilecik, Turkey}
\email{ilker.inam@gmail.com}
\email{ilker.inam@bilecik.edu.tr}

\keywords{Fibonacci Numbers, Jacobsthal Numbers, Diophantine Equations, Linear Forms in Logarithms}

\subjclass[2020]{11D45, 11J86}

\begin{document}

\begin{abstract}
In the present paper, we identify all Jacobsthal numbers that may be expressed as a product of three Fibonacci numbers. More precisely, our main result shows that the only solution to the Diophantine equation
\[
F_kF_lF_m=J_n
\]
for $2<k<l<m$ is
\[
(k,l,m,n)=(5,7,8,12).
\]
The proof relies on techniques involving linear forms in logarithms.
\end{abstract}

\maketitle

\section{Introduction}In a recent study, \cite{10}, the authors identified all Tribonacci numbers that can be expressed as the product of two Fibonacci numbers. In fact, there are only five nontrivial solutions. Inspired by this work, we obtain a new result that is similarly interesting to as the one in \cite{10}, but in a slightly modified equation. 
In \cite{5}, \cite{6}, \cite{8}, \cite{15}, and in a more general case in \cite{14}, similar problems have been studied for Fibonacci and Pell numbers. This naturally leads to the question of what occurs in the case of Fibonacci and Jacobsthal numbers.In a more recent paper, \cite{9}, the authors determine all Fibonacci numbers that result from the multiplication of two Jacobsthal numbers.This paper focuses on the case of Jacobsthal numbers and their representation as the product of three Fibonacci numbers. This equation is significant because it explores the interplay between two well-known recursive sequences, the Fibonacci and Jacobsthal numbers, in a multiplicative setting. Understanding multiplicative relations between distinct recursive sequences provides insight into the arithmetic rigidity and coincidence structure of such sequences. Note that it is also possible to solve certain Diophantine equations not only for products of specific number sequences but also for sums or differences, as demonstrated in \cite{3}, \cite{2}, \cite{16} and \cite{11}.We recall that the \textit{Fibonacci sequence} $\{F_n\}_{n\geq 0}$ is defined by $F_0 =0, F_1 = 1$ and the recurrence relation $F_{n+2} = F_n + F_{n+1}$, for all $n\geq 0$. Similarly, \textit{the Jacobsthal sequence} $\{J_n\}_{n\geq 0}$ is defined by $J_0 = 0$, $J_1 = 1$, and the recurrence relation $J_{n+2} = 2J_n + J_{n+1}$ for all $n\geq 0$. The Binet formulas of these sequences are:\begin{equation}\label{e:f}	F_n := \frac{\alpha^n-\beta^n}{\alpha-\beta}, \quad \text{for all} \quad n\geq 0,\end{equation}where $\alpha = \frac{1+\sqrt{5}}{2}$ and $\beta = \frac{1-\sqrt{5}}{2}$, and\begin{equation}\label{e:j}	J_n := \frac{2^n - (-1)^n}{3} \quad \text{for all} \quad n\geq 0,\end{equation}respectively.

\section{Main Theorem}In this paper, we investigate the problem of identifying all Jacobsthal numbers that can be expressed as the product of three Fibonacci numbers. Specifically, we provide a complete solution to the Diophantine equation\begin{equation}\label{e:d1}	F_kF_lF_m = J_n \end{equation}for all positive integers $(k, l, m, n)$. The following constitutes the primary contribution of the paper.\begin{theorem}\label{t:main}	There is exactly one non-trivial solution $(k, l, m, n)$ to the Diophantine equation above, which is	\begin{equation*}		\{ (5, 7, 8, 12)\}, \quad \text{for} \quad 2<k<l<m.	\end{equation*}	and this solution corresponds to	\begin{equation*}		F_5F_7F_8 = 5\cdot 13\cdot 21 = 1365 = J_{12}.	\end{equation*}\end{theorem}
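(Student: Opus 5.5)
We follow the standard Baker--Davenport strategy for products of linear recurrences. First, the sizes are related. From $\alpha^{r-2}\le F_r\le\alpha^{r-1}$ and $2^{n-2}\le J_n<2^{n}$ (for $n\ge 2$), equation \eqref{e:d1} gives
\[
(k+l+m-6)\log\alpha \le n\log 2 \quad\text{and}\quad (n-2)\log 2\le (k+l+m-3)\log\alpha ,
\]
so $n$ and $k+l+m$ are comparable, with $n<m\cdot 3\log\alpha/\log 2+O(1)<3m$.

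Next, a small range is disposed of by direct computation: say $m\le 200$ (hence $n\le 600$). Here we simply enumerate all triples $2<k<l<m$ and test whether $F_kF_lF_m$ is a Jacobsthal number; this should yield only $(5,7,8,12)$. For larger $m$ we use the Binet formulas \eqref{e:f} and \eqref{e:j} and obtain three linear forms in the logarithms $\log 2,\log\alpha,\log\sqrt5$.

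\emph{Step 1.} Write $J_n=F_kF_lF_m$ as
\[
\frac{2^n}{3}-\frac{\alpha^{k+l+m}}{5\sqrt5}
=\frac{(-1)^n}{3}+(\text{terms involving }\beta).
\]
Dividing by $\alpha^{k+l+m}/(5\sqrt5)$ gives
\[
\Lambda_1=\bigl|2^n\alpha^{-(k+l+m)}\cdot 5\sqrt5/3-1\bigr|\ll \alpha^{-2k}.
\]
The dominant error comes from $\alpha^{k+l+m}\beta^{k}$-type terms relative to the main term, and is roughly $\alpha^{-2k}$. Matveev's theorem, with three logarithms and height $\max\{n,\dots\}\ll m$, gives $\Lambda_1>\exp(-C\log m)$ with $C\approx 10^{12}$. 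Hence $k\ll \log m$.

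\emph{Step 2.} Keep $F_k$ exact and factor it out:
\[
\Lambda_2=\Bigl|2^n\alpha^{-(l+m)}\cdot \frac{5}{3F_k}-1\Bigr|\ll \alpha^{-2l}.
\]
Matveev now bounds $l\ll \log m\cdot\log k\ll(\log m)^2$.

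\emph{Step 3.} Keep both $F_k$ and $F_l$ exact and factor them out:
\[
\Lambda_3=\Bigl|2^n\alpha^{-m}\cdot\frac{\sqrt5}{3F_kF_l}-1\Bigr|\ll \alpha^{-2m}+2^{-n}.
\]
The error is exponentially small in $m$, while Matveev gives a lower bound of the form $\exp(-C'\,l\log m)$. This yields an absolute bound $m<10^{30}$ or so.

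Finally, this huge bound is reduced. Each $\Lambda_i$ has the shape $|n\log2-N\log\alpha+\log\mu_i|$ with $\mu_i$ explicit, and is nonzero by Galois conjugation; for instance $2^n\sqrt5/\alpha^m\notin\mathbb Q$ rules out equality. So the Dujella--Pethő version of Baker--Davenport reduction, using the continued fraction of $\log 2/\log\alpha$, applies. Iterating over the three forms (and over the finitely many $k$, then $(k,l)$, permitted at each stage) brings $m$ below $200$, and the computation above finishes the proof.

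The main obstacle is Step 3. Because $(k,l)$ ranges over a set of size about $(\log m)^3$, the reduction must be run for each pair. We must also check that $\mu_i$ is never such that the Dujella--Pethő quantity $\epsilon$ is negative; if that happens we would fall back to a direct argument, such as using Legendre's theorem on the continued fraction.
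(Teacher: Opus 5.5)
Your proposal is correct and is essentially the paper's proof. It uses the same three linear forms $\Lambda_1,\Lambda_2,\Lambda_3$ (successively keeping $F_k$, then $F_kF_l$, exact), three applications of Matveev giving $k\ll\log m$, $l\ll(\log m)^2$ and an absolute bound on $m$ (the paper gets $m<5.93\times10^{42}$, not $10^{30}$), then Dujella--Peth\H{o} reduction with $\tau=\log 2/\log\alpha$ run over $k$ and then over pairs $(k,l)$, and a final finite search. There are two minor slips. In Step 2, Matveev gives $l\ll k\log m$ rather than $\log m\cdot\log k$, because $h(5/(3F_k))$ is of order $k$; your conclusion $l\ll(\log m)^2$ is still right. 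Also, the paper's reduction only reaches $m\le 236$, so set your enumeration range from the bound the reduction actually produces (or iterate the reduction once more) instead of fixing it in advance at $200$.
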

\section{Preliminaries}For more details about this section, the reader refers to \cite{4}.Let $\alpha$ be an algebraic number of degree $d$ whose minimal polynomial is defined over $\mathbb{Z}$\begin{equation*}	c_0x^d + c_1x^{d-1} +\cdots + c_d = c_0\prod_{i=1}^{d} (x-\alpha^{(i)}) ,\end{equation*}where the leading coefficient $c_0$ is positive and the $\alpha^{(i)}$'s are conjugates of $\alpha$. Then the logarithmic height of $\alpha$ is given by \begin{equation}\label{e:h}	h(\alpha) := \frac{1}{d}(\log c_0 + \sum_{i=1}^{d}\log(\max\{\alpha^{(i)}, 1\})).\end{equation}In particular, if $\alpha = p/q$ is rational number with $\gcd(p, q)=1$, and $q \neq 0$, then \begin{equation*}	h(\alpha)=\log\max\{|p|, q\}.\end{equation*}The following are some of the properties of the logarithmic height function $h(\cdot)$, which will be used in the next section of this paper:\begin{align}\label{h:p}	h(\alpha_1\mp \alpha_2) &\leq h(\alpha_1) + h(\alpha_2) + \log 2;\\	h(\alpha_1\alpha_2^{\mp 1})& \leq h(\alpha_1) + h(\alpha_2);\label{h:p1}\\	h(\alpha^s)&=|s|h(\alpha), \qquad (s\in \mathbb{Z}).\label{h:p2}\end{align}We will also use the following important result due to Matveev.\begin{theorem}\cite{13} \label{t:M}	Let $\alpha_1, \alpha_2, \dots, \alpha_t$ be positive real algebraic numbers in a real algebraic number field $\mathbb{K}$ of degree $D$, $b_1,\dots, b_t$ be non-zero integers, and assume that 	\begin{equation}		\Lambda := \alpha_1^{b_1}\dots \alpha_t^{b_t},	\end{equation}	is non-zero. Then 	\begin{equation}		\log |\Lambda| > -1.4\times 30^{t+3}\times t^{4.5}\times D^2(1+\log D)(1+\log B)A_1\dots A_t,	\end{equation}	where	\begin{equation*}		B\geq \max\{|b_1|,\dots,|b_t|\},	\end{equation*}	and	\begin{equation*}		A_i\geq \max\{Dh(\alpha_i), |\log(\alpha_i)|, 0.16\}, \qquad \text{for all}\qquad i=1,\dots, t.	\end{equation*}\end{theorem}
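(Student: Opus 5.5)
This is Matveev's theorem, which the paper quotes from \cite{13} as a black box. I would not reprove it from scratch. The plan below follows the standard Baker--Matveev transcendence method in outline only, and the hardest steps would have to be taken from the literature.

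First, interpret the statement correctly. As printed, $\Lambda=\alpha_1^{b_1}\cdots\alpha_t^{b_t}$ is a product, and a lower bound for $\log|\Lambda|$ is false for such a product in general. The meaningful statement, and the one Matveev proves, bounds either $\Gamma:=\alpha_1^{b_1}\cdots\alpha_t^{b_t}-1\neq 0$ or the linear form $L:=b_1\log\alpha_1+\cdots+b_t\log\alpha_t\neq0$. The two versions are equivalent up to harmless constants: if $|\Gamma|<1/2$ then $|L|\le 2|\Gamma|$, and otherwise the bound is trivial. So it suffices to prove
\[
\log|L|>-C(t,D)\,(1+\log B)\,A_1\cdots A_t, \qquad C(t,D)=1.4\cdot 30^{t+3}t^{4.5}D^2(1+\log D).
\]
Next, reduce to the case where $\log\alpha_1,\dots,\log\alpha_t$ are linearly independent over $\mathbb{Q}$. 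If they are dependent, one eliminates a variable and inducts on $t$. Since $C(t,D)$ increases with $t$, the induction is absorbed, at the cost of controlling the heights of the new generators by a Minkowski-type (geometry of numbers) argument.

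The core is Baker's method in Matveev's form. Assume $|L|<e^{-C(1+\log B)A_1\cdots A_t}$ and choose integer parameters $L_0,L_1,\dots,L_t$ and $S,T$. Use Siegel's lemma (or, more cleanly, Laurent's interpolation determinants) to build an auxiliary polynomial $P(z_0,z_1,\dots,z_t)$ of degree $L_0$ in $z_0$ and $L_i$ in $z_i$. The function
\[
\Phi(z)=P\bigl(z,\alpha_1^{z},\dots,\alpha_t^{z}\bigr)
\]
should vanish to order $T$ at $z=0,1,\dots,S$. Because $L$ is tiny, the relation $\alpha_t^{b_t}\approx\prod_{i<t}\alpha_i^{-b_i}$ lets one replace $\alpha_t$ by the other $\alpha_i$. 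One then extrapolates with the Schwarz lemma and Liouville's inequality, estimating with the heights $A_i$, to get more zeros than the auxiliary construction provides. Finally, a multiplicity (zero) estimate on the group $\mathbb{G}_a\times\mathbb{G}_m^t$, of Philippon or Masser--Wüstholz type, shows that so many zeros force $P$ to vanish identically on an algebraic subgroup. This contradicts the multiplicative independence obtained in the reduction step.

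The main obstacle is getting the precise constant, namely the dependence $30^{t}t^{4.5}$ instead of Baker--Wüstholz's $t^{2t}$. This is Matveev's real innovation. First, he uses Kummer theory over $\mathbb{K}(\zeta)$ to show the relevant radical extensions have maximal degree, which removes a $t!$ factor. Second, he averages the auxiliary construction over cosets so that the zero estimate loses only exponentially in $t$. For this step I would simply follow \cite{13}, checking that the parameter choices give $1.4\cdot30^{t+3}t^{4.5}D^2(1+\log D)$ in the real case, where the field is real and the $\alpha_i$ are positive. Since the paper uses the theorem only as a tool, in context it is enough to cite \cite{13}, as the paper does.
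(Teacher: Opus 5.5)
Your proposal agrees with the paper, which gives no proof of this statement and simply imports it from Matveev \cite{13}; your outline of Baker's method goes beyond what the paper does and, as you say, is not needed in context. Your reading of the statement is also correct: for the bare product the bound fails (e.g.\ $t=D=1$, $\alpha_1=2$, $b_1=-B$ with $B$ large gives $\log|\Lambda|=-B\log 2$), so it must be read with $\Lambda=\alpha_1^{b_1}\cdots\alpha_t^{b_t}-1$, which is exactly the form in which the paper applies it to $\Lambda_1,\Lambda_2,\Lambda_3$.
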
 For the last result of preliminaries section, let $X$ be a real number. Set $||X|| := \min\{|X-n| : n\in\mathbb{Z}\}$ for the distance from $X$ to the nearest integer.\begin{lemma} \label{Lem} \cite{12}If $l \geqslant 1$, $H> (4l^{2})^{l}$ and $H> \frac{L}{(\log L )^{l}}$ then $$L < 2^{l} H ( \log H)^{l}.$$\end{lemma}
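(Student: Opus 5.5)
We argue by contradiction, using the monotonicity of $f(x):=x/(\log x)^{l}$. Suppose $L\ge M:=2^{l}H(\log H)^{l}$; we will show that then $L/(\log L)^{l}>H$, contradicting the second hypothesis. Throughout, $L>1$ is implicit, since $\log L$ must be positive for the hypothesis to make sense.

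\emph{Monotonicity.} We have $f'(x)=(\log x)^{l-1}(\log x-l)/(x(\log x)^{2l})$, so $f$ is increasing on $[e^{l},\infty)$. Since $H>(4l^{2})^{l}>e^{l}$, we have $\log H>1$. Hence $M>H>e^{l}$, and both $M$ and $L\ge M$ lie in the range where $f$ is increasing. Therefore $f(L)\ge f(M)$, and it suffices to show $f(M)>H$.

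\emph{Key estimate.} We have
\[
f(M)=\frac{2^{l}H(\log H)^{l}}{(\log M)^{l}},
\]
so $f(M)>H$ is equivalent to $\log M<2\log H$, that is, $M<H^{2}$. Writing out $M$, this becomes $(2\log H)^{l}<H$. Put $y:=H^{1/l}$, so that $y>4l^{2}$ by hypothesis. The inequality $(2\log H)^{l}<H$ then reads $2l\log y<y$, i.e.\ $y/\log y>2l$.
\begin{itemize}
\item The function $y/\log y$ is increasing for $y>e$, and $4l^{2}\ge 4>e$.
\item Hence it suffices to check the endpoint $y=4l^{2}$: we need $4l^{2}/\log(4l^{2})\ge 2l$, i.e.\ $2l\ge 2\log(2l)$.
\item This holds because $t>\log t$ for all $t>0$, applied with $t=2l$.
\end{itemize}
Since $y>4l^{2}$, monotonicity gives the strict inequality $y/\log y>2l$, hence $f(M)>H$.

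\emph{Conclusion.} Combining the steps, $L/(\log L)^{l}=f(L)\ge f(M)>H$. This contradicts the hypothesis $H>L/(\log L)^{l}$, so $L<2^{l}H(\log H)^{l}$.

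The only delicate point is the key estimate $(2\log H)^{l}<H$. This is exactly where the hypothesis $H>(4l^{2})^{l}$ is used, through the substitution $y=H^{1/l}$.
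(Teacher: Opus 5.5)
The paper gives no proof of this lemma; it only quotes it from reference [12]. Your argument is the standard one for this statement, and its structure is sound: assume $L\ge 2^lH(\log H)^l$, use that $x\mapsto x/(\log x)^l$ is increasing on $[e^l,\infty)$, and reduce to $(2\log H)^l<H$, i.e.\ $2l\log y<y$ with $y=H^{1/l}>4l^2$. Two small repairs are needed.

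\textbf{The derivative formula.} In fact $f'(x)=(\log x-l)/(\log x)^{l+1}$. Your formula carries a spurious factor $1/x$, which is harmless because it does not change the sign.

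\textbf{The last bullet.} This one matters more: the justification you give does not work. The endpoint inequality $4l^2/\log(4l^2)\ge 2l$ is equivalent to $l\ge\log(2l)$, i.e.\ $2l\ge 2\log(2l)$. Applying $t>\log t$ with $t=2l$ only yields $2l>\log(2l)$, which is weaker by a factor of $2$. The inequality you need is true, for instance because $\log(2l)=\log 2+\log l\le \log 2+(l-1)<l$, but you must supply such a reason.

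A cleaner finish skips the endpoint check altogether. Since $\log y<\sqrt y$ for every $y>0$, you get $2l\log y<2l\sqrt y<y$ as soon as $\sqrt y>2l$. That condition is exactly the hypothesis $H>(4l^2)^l$, and it shows where the constant $4l^2$ comes from.
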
\begin{lemma} \cite{1}, \cite{7} \label{l:1}	Let $M$ be a positive integer, $p/q$ be a convergent of the continued fraction of the irrational number $\tau$ such that $q>6M$, and $A, B, \mu$ be some real numbers with $A>0$ and $B>1$. Let further $\epsilon := ||\mu q|| - M||\tau q||$. If $\epsilon > 0$, then there is no solution to the inequality	\begin{equation*}		0<|u\tau - v + \mu|<AB^{-w}	\end{equation*}	in positive integers $u$, $v$ and $ w$ with	\begin{equation*}		u\leq M \qquad \text{and}\qquad w\geq \frac{\log (Aq/\epsilon)}{\log B}.	\end{equation*}\end{lemma}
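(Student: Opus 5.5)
The plan is a direct contradiction argument. The idea is to multiply the inequality through by $q$ and use that $p/q$ is a convergent, so that $q\tau$ is very close to the integer $p$. The quantity $q(u\tau - v + \mu)$ then becomes an integer plus $\mu q$, up to an error of at most $M\|\tau q\|$. The hypothesis $\epsilon>0$ says exactly that this error is too small to bring $\mu q$ within $\|\mu q\|$ of an integer, which is impossible by the definition of $\|\cdot\|$.

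Concretely, I would suppose that positive integers $u,v,w$ satisfy $0<|u\tau - v+\mu|<AB^{-w}$ with $u\le M$ and $w\ge \log(Aq/\epsilon)/\log B$.

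\textbf{Step 1 (the size of the right-hand side).} Since $B>1$, the condition on $w$ gives $B^{w}\ge Aq/\epsilon$, that is, $AB^{-w}\le \epsilon/q$. Multiplying the inequality by $q>0$ therefore yields
\[
|uq\tau - vq + \mu q| < \epsilon .
\]

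\textbf{Step 2 (using the convergent).} Write $q\tau = p+\delta$ with $\delta := q\tau - p$. Because $p/q$ is a convergent of $\tau$, the classical estimate $|q\tau - p|<1/q$ holds. Here $q>6M\ge 6$, so $|\delta|<1/2$. Hence $p$ is the nearest integer to $q\tau$, and $|\delta| = \|\tau q\|$; this is the only place the hypothesis $q>6M$ enters. Substituting,
\[
uq\tau - vq + \mu q = (up - vq) + u\delta + \mu q .
\]

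\textbf{Step 3 (contradiction).} Let $N := vq - up$, which is an integer. By the triangle inequality and $u\le M$,
\[
|\mu q - N| \le |uq\tau - vq+\mu q| + u|\delta| < \epsilon + M\|\tau q\| = \|\mu q\| .
\]
This contradicts the definition of $\|\mu q\|$ as the minimum distance from $\mu q$ to an integer. Hence no such solution $(u,v,w)$ exists.

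There is no serious obstacle. The only points needing care are:
\begin{itemize}
\item the identification $|q\tau - p| = \|q\tau\|$, which requires the standard convergent bound together with $q\ge 2$;
\item the direction of the inequality in Step 1, which uses $B>1$ so that taking logarithms preserves order.
\end{itemize}
The hypotheses $A>0$ and the lower bound $0<|\cdot|$ are not needed beyond making the statement meaningful.
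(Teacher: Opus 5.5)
Your proof is correct and is the standard argument behind this lemma (the Baker--Davenport reduction in the form of Dujella and Peth\H{o}); the paper gives no proof of its own and only cites it. You multiply by $q$, replace $q\tau$ by the integer $p$ at a cost of at most $u\|q\tau\|\le M\|q\tau\|$, and obtain an integer within distance $<\epsilon+M\|q\tau\|=\|\mu q\|$ of $\mu q$, which is impossible. As a minor aside, the hypothesis $q>6M$ is not needed for this implication: you could use the nearest integer to $q\tau$ in place of $p$ from the start.
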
\section{Proof of The Main Theorem}	Note that, one can reformulate the equation (\ref{e:d1}) by (\ref{e:f}) and (\ref{e:j}) 		\begin{equation}\label{e:d2}		F_kF_lF_m = \frac{1}{5\sqrt{5}}(\alpha^{k}-\beta^{k}) (\alpha^l-\beta^{l})(\alpha^m- \beta^{m}) = \frac{2^n-(-1)^n}{3},	\end{equation}	for some positive integers $k,l, m, n$.		We re-arrange the equation (\ref{e:d2}), then we get			\begin{align*}		F_k F_l F_m &= \frac{1}{5\sqrt{5}} \Big( \alpha^{k+l+m} - \beta^{k+l+m} - \alpha^{k+l}\beta^m - \alpha^{k+m}\beta^l + \alpha^k \beta^{l+m} - \alpha^{l+m} \beta^k+ \alpha^l \beta^{k+m} + \alpha^m \beta^{k+l} \Big) \\&= \frac{1}{5\sqrt{5}} \Big( \alpha^{k+l+m} - (-1)^{k+l+m} \alpha^{-(k+l+m)} - \alpha^{k+l} (-1)^m \alpha^{-m} - \alpha^{k+m} (-1)^l \alpha^{-l}+ \alpha^k (-1)^{l+m} \alpha^{-(l+m)} \\&\quad - \alpha^{l+m} (-1)^k \alpha^{-k} + \alpha^l (-1)^{k+m} \alpha^{-(k+m)} + \alpha^m (-1)^{k+l} \alpha^{-(k+l)} \Big) \\&\quad=\frac{2^n-(-1)^n}{3} = J_n,	\end{align*}since $\alpha\beta = -1$ , i.e. $\beta = -1/\alpha$, $\alpha/\beta = -\beta^{-2}$, and $\beta/\alpha = -\alpha^{-2}$. Thus, we have	\begin{equation} \label{zzz}\begin{aligned} \frac{1}{5\sqrt{5}}(\alpha^{k+l+m})-\frac{2^n}{3} 	&\quad = -\frac{(-1)^{n}}{3}+\frac{1}{5\sqrt{5}} \Big(- (-1)^{k+l+m} \alpha^{-(k+l+m)} - \alpha^{k+l} (-1)^m \alpha^{-m} \\&\quad- \alpha^{k+m} (-1)^l \alpha^{-l} + \alpha^k (-1)^{l+m} \alpha^{-(l+m)} - \alpha^{l+m} (-1)^k \alpha^{-k}\\ &\quad + \alpha^l (-1)^{k+m} \alpha^{-(k+m)}+ \alpha^m (-1)^{k+l} \alpha^{-(k+l)} \Big).\\\end{aligned}\end{equation}	First, it is well-established that the characteristic polynomial for Fibonacci numbers is	\begin{equation*}		\Psi(X) = X^2 - X + 1 = 0	\end{equation*}	and the roots of $\Psi(X)$ are $\{\alpha, \beta\} = \{\frac{1+\sqrt{5}}{2}, \frac{1-\sqrt{5}}{2}\}$. Let $\mathbb{K}:= \mathbb{Q}(\alpha, \beta) = \mathbb{Q}(\sqrt{5})$ be the splitting field of the polynomial $\Psi$ over $\mathbb{Q}$. Then, it is clear that $[\mathbb{K} : \mathbb{Q}] = 2$.\\	Secondly, we know that the characteristic polynomial of Jacobsthal numbers is 	\begin{equation*}		P(X) = X^2 - X - 2 ,	\end{equation*}	its roots lie in $\mathbb{Q}$.	By induction, one can easily find	\begin{equation}\label{b:j1}		\alpha^{n-2}\leq F_n\leq \alpha^{n-1}, \quad \text{for}\quad n\geq 1,	\end{equation}	and	\begin{equation}\label{ilk1}		2^{n-2}\leq J_n\leq 2^{n-1}, \quad \text{for}\quad n\geq 1.	\end{equation}	After a short calculation, we find the following:	\begin{equation}\label{ilk2}		\alpha^{k+l+m-6}\leq F_kF_lF_m =J_n \leq 2^{n-1}	\end{equation}and	\begin{equation}\label{i:ff}		2^{n-2}\leq J_n = F_kF_lF_m \leq \alpha^{k+m+l-3}.	\end{equation}	The following lemma provides an expression for $n$ in terms of $k,l$ and $m$, which we now prove.\begin{lemma}\label{l:m}		All solutions of the Diophantine equation (\ref{e:d1}) satisfy		\begin{equation} \label{1}			(k+l+m) \frac{\log \alpha}{\log 2} -3.17\leq n\leq (k+l+m)\frac{\log \alpha}{\log2}-0.08.		\end{equation}\end{lemma}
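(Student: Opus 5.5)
The plan is to take logarithms in the two chains of inequalities (\ref{ilk2}) and (\ref{i:ff}), which already sandwich $J_n=F_kF_lF_m$ between powers of $\alpha$ and powers of $2$, and then solve each for $n$. No linear forms in logarithms are needed at this stage. The lemma is a direct consequence of the elementary bounds (\ref{b:j1}) and (\ref{ilk1}), valid since $k,l,m,n\geq 1$, which were combined into (\ref{ilk2}) and (\ref{i:ff}).

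For the lower bound on $n$, I would start from (\ref{ilk2}), namely $\alpha^{k+l+m-6}\leq 2^{n-1}$. Taking natural logarithms gives $(k+l+m-6)\log\alpha\leq (n-1)\log 2$. Dividing by $\log 2>0$ yields
\[
n\geq (k+l+m)\frac{\log\alpha}{\log 2}+1-6\frac{\log\alpha}{\log 2}.
\]
Numerically $\log\alpha/\log 2\approx 0.694242$, so $1-6\log\alpha/\log 2\approx -3.1655$. This is $\geq -3.17$, which gives the left inequality in (\ref{1}).

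For the upper bound, I would use (\ref{i:ff}), that is $2^{n-2}\leq\alpha^{k+l+m-3}$. Logarithms give $(n-2)\log 2\leq (k+l+m-3)\log\alpha$, hence
\[
n\leq (k+l+m)\frac{\log\alpha}{\log 2}+2-3\frac{\log\alpha}{\log 2}.
\]
Here $2-3\log\alpha/\log 2\approx 2-2.0827=-0.0827\leq -0.08$, which gives the right inequality.

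The only step requiring care is the numerical rounding. One must evaluate $\log\alpha/\log 2$ to enough precision, about four decimals, and round each constant in the safe direction, downward for the lower bound and upward for the upper bound. Then the stated constants $-3.17$ and $-0.08$ are genuinely implied. Beyond this there is no real obstacle.
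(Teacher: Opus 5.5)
Your proposal is correct and matches the paper's proof exactly: you take logarithms in $\alpha^{k+l+m-6}\le 2^{n-1}$ and in $2^{n-2}\le \alpha^{k+l+m-3}$, then solve each for $n$. Your evaluations $1-6\log\alpha/\log 2\approx -3.1655$ and $2-3\log\alpha/\log 2\approx -0.0827$, rounded in the safe directions, supply the numerical check that the paper leaves implicit.
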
\begin{proof}		The proof follows easily from the inequality (\ref{ilk2}). One can see that		\begin{equation*}			\alpha^{k+l+m-6}\leq F_kF_lF_m = J_n\leq 2^{n-1}.		\end{equation*}Taking logarithms on both sides, we obtain an inequality of the form		\begin{equation*}			(k+l+m-6)\log \alpha \leq (n-1)\log 2,		\end{equation*}and hence		\begin{equation}\label{i:first}			 (k+l+m-6) \frac{\log \alpha}{\log 2} +1\leq n.		\end{equation}		Now, from (\ref{i:ff}) we rewrite that		\begin{equation*}			2^{n-2}\leq J_n\leq \alpha^{k+l+m-3}.		\end{equation*}		Taking the logarithm on both sides, then we get that		\begin{equation*}			(n-2)\log 2\leq (k+l+m-3)\log \alpha,		\end{equation*}		and then 		\begin{equation}\label{ln:i}			n\leq (k+l+m-3)\frac{\log \alpha}{\log2}+2.		\end{equation}Thus, from the inequalities (\ref{i:first}) and (\ref{ln:i}) we deduce that		\begin{equation}			(k+l+m-6) \frac{\log \alpha}{\log 2} +1\leq n\leq (k+l+m-3)\frac{\log \alpha}{\log2}+2.		\end{equation}		So, we obtain that 		\begin{equation}			(k+l+m) \frac{\log \alpha}{\log 2} -3.17\leq n\leq (k+l+m)\frac{\log \alpha}{\log2}-0.08.		\end{equation}		This concludes the proof.	\end{proof}			Now, in inequality (\ref{1}), it is written $n\leq (k+l+m)\frac{\log \alpha}{\log2}-0.08.$ Note that $\frac{\log \alpha}{\log2}\cong 0.69$ and $k < l< m.$ Then we have $$n\leq (k+l+m)\frac{\log \alpha}{\log2}-0.08 < (k+l+m) \cdot 1 -0.08 < (m+m+m)\cdot 1 -0.08 < 3m$$Thus we get that $n < 3m$. 		
In (\ref{zzz}), we take absolute value and divide by $\frac{1}{5\sqrt{5}} \alpha^{k+l+m} $ both sides of the rearranged equation above, then we get 		{\small\begin{align*}\left| \frac{2^{n}\cdot 5\sqrt{5}}{3\alpha^{k+l+m}} - 1 \right| &= \Biggl| - \frac{(-1)^{n} 5\sqrt{5}}{3\alpha^{k+l+m}} - \frac{(-1)^{m}}{\alpha^{2m}} - \frac{(-1)^{l}}{\alpha^{2l}} + \frac{(-1)^{l+m}}{\alpha^{2(l+m)}} - \frac{(-1)^{k}}{\alpha^{2k}} \\[-2pt]&\quad + \frac{(-1)^{k+m}}{\alpha^{2(k+m)}} + \frac{(-1)^{k+l}}{\alpha^{2(k+l)}} - \frac{(-1)^{k+l+m}}{\alpha^{2(k+l+m)}}\Biggr| \\[4pt]&\le \frac{5\sqrt{5}}{3\alpha^{k+l+m}}+ \alpha^{-2m}+ \alpha^{-2l}+ \alpha^{-2k}+ \alpha^{-2(l+m)} \\[-2pt]&\quad+ \alpha^{-2(k+m)}+ \alpha^{-2(k+l)}+ \alpha^{-2(k+l+m)} \\[4pt]&\le \alpha^{-2k}\!\left(\frac{5\sqrt{5}}{3}+7\right).\end{align*}}Thus, we have 	\begin{equation}\label{i:lam}		\left| \frac{2^{n}\cdot 5\sqrt{5}}{3\alpha^{k+l+m}}-1 \right| \leq 10.73 \cdot \alpha^{-2k}.	\end{equation}		Put		\begin{equation}\label{e:lam}		\Lambda_1 := 	 \frac{2^{n}\cdot 5\sqrt{5}}{3\alpha^{k+l+m}}-1.	\end{equation}		We claim that $\Lambda_1\neq 0$. To see this, we consider the $\mathbb{Q}$- automorphism $\sigma$ of the Galois extension $\mathbb{K} = \mathbb{Q}(\alpha, \beta) = \mathbb{Q}(\sqrt{5})$ over $\mathbb{Q}$ defined by $\sigma(\alpha) = \alpha$, $\sigma(\beta) = \beta$. Thus, $D = [\mathbb{Q}(\sqrt{5}): \mathbb{Q}] = 2$. If $\Lambda_1 = 0$, then we obtain\begin{equation*}5\sqrt{5} = 2^{-n}\cdot 3\alpha^{k+l+m}.\end{equation*}Multiplying both sides by $2^n$, we get\begin{equation*}5\cdot 2^n \sqrt{5} = 3\alpha^{k+l+m}.\end{equation*}Let $t = k+l+m$. Since $\alpha = (1+\sqrt{5})/2$, it follows that\[\alpha^t = \frac{A_1 + B_1 \sqrt{5}}{2^t}\]for some integers $A_1$ and $B_1$. Hence,\[3\alpha^t = \frac{3A_1}{2^t} + \frac{3B_1}{2^t}\sqrt{5}.\]Substituting this into the above equality and multiplying both sides by $2^t$, we obtain\[5\cdot 2^{n+t}\sqrt{5} = 3A_1 + 3B_1 \sqrt{5}.\]Comparing the rational and irrational parts in $\mathbb{Q}(\sqrt{5})$, we get\[3A_1 = 0 \quad \text{and} \quad 3B_1 = 5\cdot 2^{n+t}.\]Thus $A_1=0$ and $B_1 = \frac{5}{3}2^{n+t}$which is impossible since $B_1$ must be an integer. Therefore, $\Lambda_1 \neq 0$. 
Now, we want to apply Matveev's inequality to $\Lambda_1$ from the Theorem \ref{t:M}. To do this, we take	\begin{align*}		\alpha_1 & = 2,\\		\alpha_2 & = \alpha,\\		\alpha_3 & = \frac{5\sqrt{5}}{3}.\\		b_1 & = n,\\		b_2 & = -(k+l+m),\\		b_3 & = 1.	\end{align*}	Thus $B = k+l+m$. Further, 	\begin{equation*}		h(\alpha_1) = \log 2, \quad h(\alpha_2) = (\log \alpha)/2.	\end{equation*}		For $\alpha_3$, we use the properties of the height function (\ref{h:p1}) to deduce that		\begin{align*}		h(\alpha_3) &= h((5\sqrt{5})/3) = \log 5\sqrt{5}.	\end{align*}		From Theorem (\ref{t:M}), for $i=1,2,3$ we take $A_i$ as		\begin{align*}		A_1&\geq \max\{2\cdot h(\alpha_1), \mid \log\alpha_1\mid, 0.16\}\geq 1.38,\\		A_2&\geq \max\{2\cdot h(\alpha_2), \mid \log\alpha_2\mid, 0.16\}\geq 0.48,\\		A_3&\geq \max\{2\cdot h(\alpha_3), \mid\log\alpha_3\mid, 0.16\}\geq 4.82.	\end{align*}		Then we can take $A_1 = 1.39$, $A_2 = 0.49$, $A_3 = 4.83$. Since Theorem \ref{t:M}, we have	\begin{align*}		\log\mid \Lambda_1\mid&> -1.4\cdot 30^6\cdot 3^{4.5}\cdot 4(1+\log 2)(1+\log (k+l+m))\cdot (1.39)\cdot (0.49)\cdot (4.83)\\		&> -3.191 \times 10^{12} (1+\log (k+l+m)).	\end{align*}	Then we have from the equation (\ref{i:lam})	\begin{align*}		2k \cdot \log \alpha & \leq \log(10.73)- \log \mid \Lambda_1\mid \leq 3.192 \times 10^{12} (1+\log (k+l+m)).	\end{align*}	\begin{align*}\label{d4}		k \cdot \log \alpha & \leq 1.596 \times 10^{12} (1+\log (k+l+m)) \leq 1.596 \times 10^{12} (1+\log (3m)).	\end{align*}	Now, we rewrite the equation (\ref{e:d1}) and we fix $k$. Thus we deduce 	\begin{equation}\label{d2}		F_kF_lF_m = \frac{1}{5}F_k (\alpha^l-\beta^{l})(\alpha^m- \beta^{m}) = \frac{2^n-(-1)^n}{3},	\end{equation}	for some positive integers $k,l, m, n$.	Then we obtain that 		 \begin{equation}\label{d3}	\frac{2^n}{3} - \frac{\alpha^{l+m} \cdot F_k}{5}=\frac{(-1)^n}{3}+(-\frac{\alpha^{l}\beta^{m}}{5}-\frac{\alpha^{m}\beta^{l}}{5} + \frac{\beta^{l+m}}{5}) \cdot F_k. 	\end{equation}	We divide by $\frac{\alpha^{l+m} \cdot F_k}{5}$ both sides of the Equation (\ref{d3})		\begin{equation*}		1- \frac{2^n\alpha^{-(l+m)}\cdot 5}{3 \cdot F_k} = \frac{(-1)^{n} \cdot 5}{3 \cdot \alpha^{(l+m)}\cdot F_k}- \frac{\beta^{m}}{\alpha^{m}} - \frac{\beta^{l}}{\alpha^{l}}+\frac{\beta^{l+m}}{\alpha^{l+m}}.	\end{equation*}	Taking an absolute value of the previous equation		\begin{align*}		\lvert 	1- \frac{2^n\alpha^{-(l+m)}\cdot 5}{3 \cdot F_k} \rvert 		&\leq \lvert \frac{(-1)^{n} \cdot 5}{3 \cdot \alpha^{(l+m)}\cdot F_k}- \frac{\beta^{m}}{\alpha^{m}} - \frac{\beta^{l}}{\alpha^{l}}+\frac{\beta^{l+m}}{\alpha^{l+m}}		\rvert 		& \leq \lvert \frac{5}{3 \cdot \alpha^{(l+m)}\cdot F_k} \rvert + \frac{|\beta^{m}|}{\alpha^{m}} + \frac{|\beta^{l}|}{\alpha^{l}}+\frac{|\beta^{l+m}|}{\alpha^{l+m}}		 .	\end{align*}		Thus, we get that		\begin{equation}\label{i:l2}		\lvert 	1- \frac{2^n\alpha^{-(l+m)}\cdot 5}{3 \cdot F_k} \rvert < 3.84 \cdot {\alpha^{-2l}}.	\end{equation}		Put		\begin{equation}\label{lam:2}		\Lambda_2 = 1- \frac{2^n\alpha^{-(l+m)}\cdot 5}{3 \cdot F_k}.\end{equation}We claim that $\Lambda_2 \neq 0$. Clearly, $\Lambda_2 \in \mathbb{K}$. Suppose that $\Lambda_2 = 0$. Then\begin{equation*}2^n\alpha^{-(l+m)}\cdot 5 = 3F_k.\end{equation*}Multiplying both sides by $\alpha^{l+m}$, we obtain\begin{equation*}5\cdot 2^n = 3F_k \alpha^{l+m}.\end{equation*}Since $\alpha = (1+\sqrt{5})/2$, we have\[\alpha^{t}=\frac{A_2+B_2 \sqrt{5}}{2^{t}}\]for some integers $A_2,B_2$. Hence,\[3F_k \alpha^{l+m}=\frac{3F_kA_2}{2^{l+m}}+\frac{3F_kB_2}{2^{l+m}}\sqrt{5}.\]Multiplying both sides by $2^{l+m}$ gives\[5\cdot 2^{n+l+m}=3F_kA_2+3F_kB_2\sqrt{5}.\]The left-hand side is rational, whereas the right-hand side contains anirrational term unless $B_2=0$. Thus $B_2=0$, which implies $\alpha^{l+m}\in\mathbb{Q}$.This is impossible for $l+m>0$. Therefore, $\Lambda_2\neq 0$.		
Now, we want to apply Matveev's theorem to $\Lambda_2$ from the Theorem \ref{t:M}. To do this we take	\begin{align*}		\alpha_1 &= 2,\\		\alpha_2& = \alpha,\\		\alpha_3&=\frac{3 \cdot F_k}{5}\\		b_1 & = n,\\		b_2 & = -(l+m),\\		b_3 & = -1.	\end{align*}	Thus, $B =3m$, since $n < 3m$. Further	\begin{equation*}		h(\alpha_1) = \log 2,\quad h(\alpha_2) = (\log\alpha)/2.	\end{equation*}	For $\alpha_3$ we use the properties of the height function (\ref{h:p1}) and (\ref{b:j1}), to deduce that		\begin{align*}		h(\alpha_3) &= h(\frac{3 F_k}{5})\\		&\leq \log (3) + \log(5) + (k-l)\log \alpha\\		&\leq \log(15)+ k \log \alpha.	\end{align*} From Theorem \ref{t:M} we take 	\begin{align*}		A_1'&\geq \max\{2\cdot h(\alpha_1), \lvert \log\alpha_1\rvert, 0.16\}\geq 1.38\\		A_2' &\geq \max\{2\cdot h(\alpha_2), \lvert \log\alpha_2\rvert, 0.16\}\geq 0.48\\	\end{align*}	and since \begin{align*}	 2 \cdot h(\alpha_3) &\leq 2 \log(15) + 2 \cdot k\log \alpha\\	 &\leq 2 \log(15) + 2 \cdot 1.596 \times 10^{12} (1+\log (3m)) \\	&< 3.193 \times 10^{12} (1+ \log (3m)), \end{align*} so, $$A_3'=3.193 \times 10^{12} (1+ \log (3m)).$$Then we can take $A_1 '= 1.39$, $A_2'= 0.49$, and $A_3'= 3.193 \times 10^{12} (1+ \log (3m))$. Since Theorem \ref{t:M}, we have	\begin{align*}		\log\lvert\Lambda_2\rvert &> -1.4\cdot 30^6\cdot 3^{4.5}\cdot 4(1 + \log 2)(1 + \log (3m))(1.39)(0.49)(3.193 \times 10^{12} (1+ \log (3m)))\\		&> - 2.109 \times 10^{24}(1 + \log(3m))^2.	\end{align*}	Then, if the logarithm of both sides of the equation (\ref{i:l2}) is taken, we obtain	$$\log\lvert\Lambda_2\rvert \leq \log(3.84)- 2l \log \alpha.$$ So, $$ 2l \log \alpha \leq \log(3.84)- \log\lvert\Lambda_2\rvert \leq \log(3.84) + 2.109 \times 10^{24}(1 + \log(3m))^2 \leq 2.11 \times 10^{24}(1 + \log(3m))^2,$$ and since we have that\begin{equation*}		l \log \alpha < 1.06 \times 10^{24}(1+ \log(3m))^2.	\end{equation*}		Now, we rewrite the equation (\ref{e:d1}) and we fix $k$ and $l$. Thus we have	\begin{equation}\label{d5}		F_kF_lF_m = \frac{1}{\sqrt{5}}F_k F_l(\alpha^m- \beta^{m}) = \frac{2^n-(-1)^n}{3},	\end{equation}	for some integers $k,l, m, n$.	Then we obtain that 		 \begin{equation}\label{d6}	\frac{2^n}{3} - \frac{\alpha^{m} \cdot F_k \cdot F_l}{\sqrt{5}}=\frac{(-1)^n}{3}-\frac{\beta^{m} \cdot F_k \cdot F_l}{\sqrt{5}}	\end{equation}	We divide by $-\frac{\alpha^{m} \cdot F_k \cdot F_l}{\sqrt{5}}$ both sides of Equation (\ref{d6})		\begin{equation*} 1-		\frac{2^n\alpha^{-m}\cdot \sqrt{5}}{3 \cdot F_k F_l} =\frac{\beta^{m}}{\alpha^{m}}- \frac{(-1)^{n} \cdot \sqrt{5}}{3 \cdot \alpha^{m}\cdot F_k F_l} .	\end{equation*}	Taking an absolute value of the previous equation		\begin{align*}		\lvert 	1- \frac{2^n\alpha^{-m}\cdot \sqrt{5}}{3 \cdot F_k F_l} \rvert &\leq \lvert \frac{\beta^{m}}{\alpha^{m}}- \frac{(-1)^{n} \cdot \sqrt{5}}{3 \cdot \alpha^{m}\cdot F_k F_l} \rvert	& \leq \lvert \frac{\sqrt{5}}{3 \cdot \alpha^{m}\cdot F_k F_l} \rvert + \lvert \frac{1}{\alpha^{2m}} \rvert & \leq 1.125 \cdot \alpha^{-m}.	\end{align*}		Thus, we get that		\begin{equation}\label{d7}		\lvert 	1- \frac{2^n\alpha^{-m}\cdot \sqrt{5}}{3 \cdot F_k F_l} \rvert < 1.125 \cdot \alpha^{-m}.	\end{equation}		Put		\begin{equation}\label{lam:3}\Lambda_3 = \frac{2^n\alpha^{-m}\cdot \sqrt{5}}{3 \cdot F_k F_l}-1.\end{equation}We claim that $\Lambda_3 \neq 0$. Clearly, $\Lambda_3 \in \mathbb{K}$. Suppose that $\Lambda_3 = 0$. Then\begin{equation*}2^n\alpha^{-m}\sqrt{5} = 3F_kF_l.\end{equation*}Multiplying both sides by $\alpha^{m}$, we obtain\begin{equation*}2^n\sqrt{5} = 3F_kF_l\,\alpha^{m}.\end{equation*}Since $\alpha = (1+\sqrt{5})/2$, we have\[\alpha^{m}=\frac{A_3+B_3\sqrt{5}}{2^{m}}\]for some integers $A_3,B_3$. Substituting this into the above equality gives\[2^n\sqrt{5}=\frac{3F_kF_lA_3}{2^{m}}+\frac{3F_kF_lB_3}{2^{m}}\sqrt{5}.\]Multiplying both sides by $2^{m}$, we get\[2^{n+m}\sqrt{5}=3F_kF_lA_3+3F_kF_lB_3\sqrt{5}.\]Comparing the rational and irrational parts in $\mathbb{Q}(\sqrt{5})$, we obtain\[3F_kF_lA_3=0, \qquad 3F_kF_lB_3=2^{n+m}.\]Thus $A_3=0$ and\[B_3=\frac{2^{n+m}}{3F_kF_l}.\]This is impossible since $B_3$ must be an integer. Therefore, $\Lambda_3 \neq 0$. 		
Now, we want to apply Matveev's theorem to $\Lambda_3$ from the Theorem (\ref{t:M}). To do this we take	\begin{align*}		\alpha_1 &= 2,\\		\alpha_2& = \alpha,\\		\alpha_3&=\frac{3 F_k F_l}{\sqrt{5}}\\		b_1 & = n,\\		b_2 & = -m,\\		b_3 & = -1.	\end{align*}	Thus, $B = 3m$ since $n < 3m$. Further	\begin{align*}		h(\alpha_3) &\leq h(\frac{3 F_k F_l}{\sqrt{5}})\\		&\leq \log (3) + \log (\sqrt{5}) + \log F_k +\log F_l\\	&\leq \log (3) + \log (\sqrt{5}) + (k-1) \log \alpha + (l-1) \log \alpha\\	& < \log(3\sqrt{5}) + 2l \log \alpha.	\end{align*} So, \begin{align*}		2 \cdot h(\alpha_3)	&\leq 2 \log(3\sqrt{5}) + 2 \cdot 2l \log \alpha\\ &\leq 2 \log(3\sqrt{5}) + 4 \cdot 1.06 \times 10^{24} (1+ \log (3m))^2\\ &< 4.25\times 10^{24} (1+ \log (3m))^2.	\end{align*} From Theorem \ref{t:M}, we take 	\begin{align*}		A_1''&\geq \max\{2\cdot h(\alpha_1), \lvert \log\alpha_1\rvert, 0.16\}\geq 1.38,\\		A_2'' &\geq \max\{2\cdot h(\alpha_2), \lvert \log\alpha_2\rvert, 0.16\}\geq 0.48,\\		A_3'' & \geq \max\{2\cdot h(\alpha_3), \lvert \log\alpha_3\rvert, 0.16\}\geq 4.25 \times 10^{24} (1+ \log (3m))^2 .	\end{align*}	Then we can take $A_1 ''= 1.39$, $A_2''= 0.49$, and $A_3''= 4.25 \times 10^{24} (1+ \log (3m))^2$. Since the Theorem \ref{t:M}, we have	\begin{align*}		\log\lvert\Lambda_3\rvert &> -1.4\cdot 30^6\cdot 3^{4.5}\cdot 4(1 + \log 2)(1 + \log 3m)(1.39)(0.49)(4.23 \times 10^{24} (1+ \log (3m))^2)\\		&> - 2.794 \times 10^{36}(1+ \log(3m))^3.	\end{align*}	Then we have that from the equation (\ref{d7})	\begin{equation*}		m \log \alpha <\log 1.125 + 2.794 \times 10^{36}(1+ \log(3m))^3,	\end{equation*}	and we get \begin{equation}m \log \alpha < 2.8 \times 10^{36}(1+ \log(3m))^3.\end{equation}	With the help of SageMath, we can see that $m < 5.93 \times 10^{42}$.	We have just proved the following lemma.	\begin{lemma}\label{lem:bo}		All solution to the Diophantine equation (\ref{e:d1}) satisfy		\begin{equation*}			m < 5.93 \times 10^{42}.		\end{equation*}	\end{lemma}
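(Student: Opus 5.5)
The bound in Lemma \ref{lem:bo} is the end point of the chain of three applications of Theorem \ref{t:M}. The plan is to convert that chain into an explicit implicit inequality in $m$ alone and then invert it with Lemma \ref{Lem}.

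\emph{Step 1: bound $k$ from $\Lambda_1$.} By Lemma \ref{l:m} we have $n<3m$ and $k+l+m<3m$. Feeding (\ref{i:lam}) into Theorem \ref{t:M} with $t=3$, $D=2$ and $B=3m$ gives
\[
k\log\alpha<1.6\times 10^{12}\,\bigl(1+\log(3m)\bigr).
\]

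\emph{Step 2: bound $l$ from $\Lambda_2$.} For (\ref{i:l2}) the only new ingredient is the height of $3F_k/5$. Since $F_k\le \alpha^{k-1}$ by (\ref{b:j1}), this height is at most $\log 15+k\log\alpha$, which Step 1 already controls. Matveev then gives
\[
l\log\alpha<1.06\times10^{24}\,\bigl(1+\log(3m)\bigr)^2 .
\]

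\emph{Step 3: bound $m$ from $\Lambda_3$.} For (\ref{d7}) the height of $3F_kF_l/\sqrt5$ is at most $\log(3\sqrt5)+2l\log\alpha$. Matveev then yields
\[
m\log\alpha<2.8\times10^{36}\,\bigl(1+\log(3m)\bigr)^3 .
\]
The nonvanishing of $\Lambda_1,\Lambda_2,\Lambda_3$ is already established, so all three applications of Matveev are legitimate.

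\emph{Step 4: invert with Lemma \ref{Lem}.} Put $L:=3m$. For $m$ large, $1+\log(3m)<1.1\log(3m)$. The inequality of Step 3 then has the shape
\[
\frac{L}{(\log L)^3}<H:=\frac{3\cdot 2.8\times10^{36}\cdot 1.1^3}{\log\alpha}\approx 2.3\times10^{37}.
\]
The case where $m$ is small enough that $1+\log(3m)<1.1\log(3m)$ fails is trivial, since then $m$ is tiny. Taking $l=3$ in Lemma \ref{Lem}, the condition $H>(36)^3$ clearly holds. The lemma gives
\[
L<2^3 H(\log H)^3 \approx 8\cdot 2.3\times 10^{37}\cdot(86.0)^3\approx 1.2\times 10^{44},
\]
and hence $m=L/3<5.93\times10^{42}$ up to the precise constants. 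If this crude estimate lands slightly above the claimed constant, I would instead confirm it by solving $x\log\alpha=2.8\times10^{36}(1+\log 3x)^3$ numerically, as the authors do with SageMath.

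The main obstacle is the bookkeeping of constants in this final inversion. The factor $2^l(\log H)^l$ is wasteful, so matching exactly $5.93\times10^{42}$ may require the direct numerical root-finding rather than Lemma \ref{Lem}. Either way, the qualitative conclusion, a bound of order $10^{42}$–$10^{44}$, is robust.
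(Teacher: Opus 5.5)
Your Steps 1--3 are the paper's own chain of three applications of Theorem~\ref{t:M}, ending in $m\log\alpha<2.8\times10^{36}(1+\log 3m)^3$. The gap is in Step 4, the one place where you depart from the paper.

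By your own numbers, Lemma~\ref{Lem} gives $3m<1.18\times10^{44}$, i.e.\ $m<3.95\times10^{43}$. That is more than six times the stated constant, so ``hence $m<5.93\times10^{42}$'' does not follow. This is not a bookkeeping issue. The largest solution of $x\log\alpha=2.8\times10^{36}(1+\log 3x)^3$ is about $5.92\times10^{42}$, so the lemma's constant has less than $0.2\%$ of slack. The factor $2^3(\log H)^3$ in Lemma~\ref{Lem} cannot stay within that margin. Your replacement of $1+\log 3m$ by $1.1\log 3m$ alone costs almost $30\%$ at this size. The ``numerical fallback'' you mention is exactly the paper's final step (it solves the inequality with SageMath). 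It has to be carried out, not deferred.

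A clean way to carry it out is as follows. Put $\phi(x)=x/(1+\log 3x)^3$. Then $\phi'(x)=(\log 3x-2)/(1+\log 3x)^4>0$ for $x>e^2/3$, so $\phi$ is increasing on the relevant range. Step 3 says $\phi(m)<2.8\times10^{36}/\log\alpha\approx5.819\times10^{36}$. For $x_0=5.93\times10^{42}$ one has $1+\log(3x_0)\approx100.587$ and hence $\phi(x_0)\approx5.827\times10^{36}$. Monotonicity then gives $m<x_0$.

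Because the margin is so thin, the Step 3 constant must itself be derived without generous rounding. Your intermediate values are $l\log\alpha<1.06\times10^{24}(1+\log 3m)^2$ and $2h(3F_kF_l/\sqrt5)\le 2\log(3\sqrt5)+4l\log\alpha$. These give about $2.8005\times10^{36}$ in place of $2.8\times10^{36}$, which still lands below $5.93\times10^{42}$. By contrast, taking $A_3=4.25\times10^{24}(1+\log 3m)^2$, as displayed in the paper, gives about $2.807\times10^{36}$. That pushes the threshold to roughly $5.937\times10^{42}$, just past the stated bound.
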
		The bounds given in Lemma \ref{lem:bo} are too large to carry out meaningful computing. Thus, we need to reduce them. To do this, we apply Lemma \ref{l:1} as follows.		First we return to the inequality (\ref{i:lam}) and we consider that	\begin{equation*}		\Gamma_1 := n\log 2 - (k+l+m) \log \alpha +\log(\frac{5\sqrt{5}}{3}).	\end{equation*}	We note that $e^{\Gamma_1}- 1 = \Lambda_1$. Since $\Lambda_1\neq 0$, we have that $\Gamma_1\neq 0$. If $\Gamma_1>0$, we obtain		\begin{equation*}		0<\Gamma_1\leq 1 - e^{\Gamma_1} = \lvert e^{\Gamma_1 }-1\rvert = \lvert \Lambda_1\rvert < \frac{10.73}{\alpha^{2k}}.	\end{equation*}		On the other hand, if $\Gamma_1<0$, we have that $e^{\Gamma_1}-1 =\lvert e^{\Gamma_1}-1 \rvert= \lvert \Lambda_1\rvert<\frac{1}{2}$. Thus we have	\begin{equation*}		0<\lvert\Gamma_1\rvert<e^{\lvert \Gamma_1\rvert}-1 <\frac{1}{2}.	\end{equation*}		So in the both cases we have that		\begin{equation*}		0<\lvert \Gamma_1\rvert< 2 \cdot \lvert e^{\Gamma_1}-1 \rvert <1.5 \cdot \frac{10.73}{\alpha^{2k}} = 16.095 \cdot \alpha^{-2k},	\end{equation*}		Dividing through by $\log \alpha$, we get		\begin{equation*}		0< \lvert u \tau- v+ \mu\rvert <\frac{16.095}{\log \alpha} \cdot \alpha^{-2k}.	\end{equation*}		Now, we apply Lemma \ref{l:1} with the following values	\begin{equation*}		u:=n \quad v:=(k+l+m) \quad w:= 2k \quad A :=\frac{16.095}{\log \alpha} = 33.45, \quad B :=\alpha, \quad \mu := \frac{\log(\frac{5\sqrt{5}}{3})}{\log \alpha},\quad \text{and} \quad \tau:= \frac{\log 2}{\log \alpha}.	\end{equation*}		From Lemma \ref{lem:bo}, we can write $$m < \frac{2.8 \cdot 10^{36}}{\log \alpha} (1+ \log 3m)^3 $$ and then since $1 < \log3m,$ we take $$3m < 3 (5.82 \cdot 10^{36} ) (1+\log(3m))^3 < 3\cdot 2^3 \cdot (5.82 \cdot 10^{36} )(\log (3m))^3 < 2^3 \cdot (17.46\cdot 10^{36}) (\log 3m)^3.$$ We can apply Lemma \ref{Lem} and we obtain $m < 2.94 \cdot 10^{43}.$ From inequality (\ref{1}), we have that $$n < (3m) \frac{\log \alpha}{\log 2} - 0.08 \leq 3 \cdot 2.94 \times 10^{43} \cdot \frac{\log \alpha}{\log 2} -0.08, $$ then we have $$ n < 6.13 \times 10^{43}.$$ This is an upper bound on $n$, and we get $M = 6.13 \times 10^{43}$		With the help of Mathematica, we find out that the convergent to $\tau$	\begin{equation*}		\frac{p_{92}}{q_{92}}= \frac{110504885181872237696754298142651378827960967}{76717122954194627742532516113798302944300109}	\end{equation*}	is such that $q = q_{92} > 6M$. Furthermore, it yields $\epsilon = 0.284374 >0$, and therefore 	\begin{equation*}	2k \geqslant \frac{(\log(33.45)q_{92}/\epsilon)}{\log \alpha},	\end{equation*}	then we get	 \begin{equation}\label{n-m}		k \leq 110.		\end{equation} Now, we proceed to the second case in reducing the bound on $l$. To this end, we return to Equation (\ref{i:l2}) and consider the expression	\begin{equation*}		\Gamma_2 := n\log 2 -(l+m) \log \alpha + \log(\frac{5}{3\cdot F_k}).	\end{equation*}	We note that $1-e^{\Gamma_2} = \Lambda_2$. Since $\Lambda_2\neq 0$, we have that $\Gamma_2\neq 0$. If $\Gamma_2> 0$, we obtain		\begin{equation*}		0<\Gamma_2\leq e^{\Gamma_2}-1 = \lvert1-e^{\Gamma_2}\rvert = \lvert \Lambda_2\rvert <\frac{3.84}{\alpha^{2l}}.	\end{equation*}	If $\Gamma_2<0$, we have that $1-e^{\Gamma_2} = \lvert 1- e^{\Gamma_2}\rvert = \lvert \Lambda_2\rvert < 1/2$. Then $e^{\lvert\Gamma_2\rvert}<2$. Thus,	\begin{equation*}		0<\lvert \Lambda_2\rvert < e^{\lvert\Gamma_2\rvert}-1 = e^{\lvert \Gamma_2\rvert}\lvert\Lambda_2\rvert <\frac{3.84}{\alpha^{2l}}.	\end{equation*}		So, in both cases we have that		\begin{equation*}		0< \lvert \Gamma_2\rvert < \frac{5.76}{\alpha^{2l}}.	\end{equation*}		Dividing through by $\log \alpha$ we get		\begin{equation*}		0<\lvert n\tau - m + \mu\rvert < \frac{11.97}{\alpha^{2l}}.	\end{equation*}	We now apply Lemma \ref{l:1} with the data	\begin{equation*}		A := 11.97, \quad B := \alpha, \quad \tau := \frac{\log 2}{\log \alpha}, \quad \mu := \frac{\log(\frac{5}{3\cdot F_k})}{\log \alpha}.	\end{equation*}	To do this, we rewrite inequaity (\ref{1}) for $k \leqslant 110$ and for $k < l< m$ we obtain \begin{align*}	 n &< (k+l+m) \frac{\log \alpha}{\log 2} - 0.08\\ &\leq(k+2m) \frac{\log \alpha}{\log 2} - 0.08\\ &\leq (110+2 \cdot 5.93 \times 10^{42}) \frac{\log \alpha}{\log 2} - 0.08\\ &\leq 8.24 \times 10^{42}.\end{align*} Then we take $M := 8.24 \times 10^{42}$. From Lemma \ref{lem:bo}, we have that $M$ is an upper bound on $m$.		Let $\tau =[a_0; a_1, a_2, a_3,\dots] = [1; 2, 3, 1, 2, 3, 2, 4, 2, 1, 2, 11, 2, 1, 11, 1, 1, 134, 2, 2,\dots]$ be the continued fraction expansion of $\tau$ With Mathematica we find that the convergent		\begin{equation*}		\frac{p}{q} = \frac{p_{92}}{q_{92}},	\end{equation*}	is such that $q = q_{92}= 76717122954194627742532516113798302944300109> 6M$. Furthermore, it yields $\epsilon=0.00120395 >0$,	and therefore since		\begin{equation*}		2l \geqslant \frac{\log((11.97)q/\epsilon)}{\log \alpha},	\end{equation*}we have $l \leq 114$. 		 Now, we begin to third case of the reducing bound on $m$. To do this, we return to (\ref{lam:3}) and we consider that	\begin{equation*}		\Gamma_3 := n\log 2 -m \log \alpha + \log(\frac{\sqrt{5}}{3\cdot F_k F_l})	\end{equation*}	We obtain	\begin{equation*}		0<\Gamma_3\leq e^{\Gamma_3}-1 = \lvert1-e^{\Gamma_3}\rvert = \lvert \Lambda_3\rvert <\frac{1.125}{\alpha^{m}}.	\end{equation*}		So, in both cases we have that		\begin{equation*}		0< \lvert \Gamma_3\rvert < \frac{1.6875}{\alpha^{m}}.	\end{equation*}	Dividing through by $\log \alpha$ we get		\begin{equation*}		0<\lvert n\tau - m + \mu\rvert < \frac{3.51}{\alpha^{m}}.	\end{equation*}	We now apply Lemma (\ref{l:1}) with the data	\begin{equation*}		A := 3.51, \quad B := \alpha, \quad \tau := \frac{\log 2}{\log \alpha}, \quad \mu :=\frac{\log(\frac{\sqrt{5}}{3\cdot F_k F_l})}{\log \alpha} .	\end{equation*}	To do this, for $k \leqslant 110$ and $l \leqslant 114$, we can write again inequality (\ref{1}) we take \begin{align*}	 n &< (k+l+m) \frac{\log \alpha}{\log 2} - 0.08\\ &\leq(110+ 114 +5.93 \times 10^{42}) \frac{\log \alpha}{\log 2} - 0.08\\ &\leq 2.04107 \times 10^{43},\end{align*} so we get $M:=2.04107 \times 10^{43}$ From Lemma \ref{lem:bo}, we have that $M$ is an upper bound on $m$.		Let $\tau =[a_0; a_1, a_2, a_3,\dots] = [1; 2, 3, 1, 2, 3, 2, 4, 2, 1, 2, 11, 2, 1, 11, 1, 1, 134, 2, 2,\dots]$ be the continued fraction expansion of $\tau$ With Mathematica we find that the convergent		\begin{equation*}		\frac{p}{q} = \frac{p_{93}}{q_{93}},	\end{equation*}	is such that $q = q_{93}= 464653241314711922144310025276040504720880720 > 6M$. Furthermore, it yields $\epsilon=0.0000751663>0$,	and therefore either		\begin{equation*}		m \geqslant \frac{\log((3.51)q/\epsilon)}{\log \alpha}\geqslant 236.073. 	\end{equation*}	 Thus, we have $m \leqslant 236$ so $k \leqslant l \leqslant m \leqslant 236.$ This finishes the proof.
\section*{Acknowledgements}

This work was supported by a research grant from the BK21 FOUR Program at Jeonbuk National University. The authors would like to thank the anonymous referee for his/her careful reading of the manuscript and for their valuable comments and suggestions, which have significantly improved the quality and clarity of this paper.

\section*{Author Contributions}

All authors contributed equally to all aspects of the preparation of this manuscript.

\section*{Competing Interests}

The authors declare that they have no competing interests.

\section*{Data Availability}

No data were generated or analyzed during this study.

\end{document}